\newtheorem{theorem}{Theorem}[section]
\newtheorem{proposition}[theorem]{Proposition}
\theoremstyle{definition}
\newtheorem{definition}[theorem]{Definition}
\def\r{\mathbb R}
\def\h{\mathbb H}
\def\s{\mathbb S}
\begin{document}

\title{Solitons of the mean curvature flow in $\s^2\times\r$ }
\author{Rafael L\'opez, Marian Ioan Munteanu}
\address{ Departamento de Geometr\'{\i}a y Topolog\'{\i}a\\  Universidad de Granada. 18071 Granada, Spain}
\email{rcamino@ugr.es}
\address{University Al. I. Cuza' of Iasi, Faculty of Mathematics, Bd. Carol I, no. 11, 700506 Iasi, Romania}
\email{marian.ioan.munteanu@gmail.com}

\begin{abstract}
A    soliton of the mean curvature flow in the product space $\s^2\times\r$ is a surface whose mean curvature $H$ satisfies the equation $H=\langle N,X\rangle$, where $N$ is the unit normal of the surface and $X$ is a Killing vector field. In this paper we consider  the vector field   tangent to the fibers and the vector field associated to rotations about an axis of $\s^2$, respectively.  We give a classification of the solitons with respect to these vector fields   assuming that the surface is invariant  under a one-parameter  group of vertical translations or  rotations of $\s^2$.
 \end{abstract}

\subjclass{53A10, 53C42, 53C44.}

\keywords{solitons, mean curvature flow, $\s^2\times\r$, one-parameter group}
\maketitle

 %%%%%%%%%%%%%%%%%%%%%%%%%%%%%%
\section{Introduction}
 
Let   $\psi\colon\Sigma\to\r^3$  be an immersion of a surface $\Sigma$ in Euclidean space $\r^3$. A variation $\{ \psi_t\colon\Sigma\to \r^3:t\in  [0,T)\}$, $T>0$, $\psi_0=\psi$, evolves by the mean curvature flow (MCF in short) if $\displaystyle\frac{\partial\psi_t}{\partial t}=H(\psi_t)N(\psi_t)$, where  $H(\psi_t)$ is the mean curvature of $\psi_t$ and $N(\psi_t)$ is its unit normal.   The surface $\Sigma$ is called a soliton of MCF   if the evolution of $\Sigma$ under a one-parameter family of dilations or isometries remains constant. An important type of solitons are the translators whose shape  is invariant by  translations   along a direction $\vec{v}\in\r^3$. Translators are   characterized by the equation $H=\langle N,\vec{v}\rangle$, where $H$ and $N$ are the mean curvature and unit normal of $\Sigma$ respectively.  Translators play a special role in the theory of MCF because they are, after rescaling,  a type of singularities of the MCF  according to Huisken and Sinestrari  \cite{hs}.   
In the meantime, the development of the theory of solitons of the MCF in other ambient space has been developed. Without to be complete, we refer:  a general product space $M^2\times \r$ \cite{lima}; hyperbolic space \cite{bl1,bl2,li,mr};  the product $\h^2\times\r$ \cite{bue1,bue2,bl3,lipi}; the Sol space \cite{pi1}; the Heisenberg group \cite{pi2}; the special linear group \cite{lm1}. 

In this paper, we focus on solitons in the product space $\s^2\times\r$, where $\s^2$ is the unit sphere of $\r^3$. Looking in the equation  $H=\langle N,\vec{v}\rangle$, it is natural to replace $\vec{v}$ by a Killing vector of the space, which motivates the following definition. 

\begin{definition}\label{def1} Let $X\in\mathfrak{X}(\s^2\times\r)$ be a Killing vector field. A surface $\Sigma$ in $\s^2\times\r$ is said to be a $X$-soliton   if its mean curvature $H$ and unit normal vector $N$ satisfy
\begin{equation}\label{eq1}
H=\langle N,X\rangle.
\end{equation}
\end{definition}

In this paper, $H$ is the sum of the principal curvatures of the surface. The dimension of Killing vector fields in the space $\s^2\times\r$ is $4$. Taking coordinates $(x,y,z,t)$ in $\s^2\times\r$, a relevant Killing vector field is $V=\partial_t$.  Here $V$ is tangent to the fibers of the natural submersion $\s^2\times\r\to\s^2$. Other Killing vector fields come from the rotations of $\s^2$.   After renaming coordinates, consider the vector field   $R=-y\partial_x+x\partial_y$  about the $z$-axis of $\s^2$. Examples of solitons are the following.  
\begin{enumerate}
\item  Cylinders over geodesic of $\s^2$ are $V$-solitons. Indeed, let $\Sigma=C\times\r$ be a surface constructed as a cylinder over a curve $C$ of $\s^2$. Then the mean curvature of $\Sigma$ is $H=\kappa$, where $\kappa$ the curvature of $C$. Since the unit normal vector $N$ of $\Sigma$ is orthogonal to $\partial_t$, then $\langle N,V\rangle=0$. Thus  $\Sigma$ is a $V$-soliton if and only if   $\kappa=0$. Thus $C$ is a geodesic of $\s^2$. 
\item Slices $\s^2\times\{t_0\}$, $t_0\in\r$, are $R$-solitons. Notice that $H=0$ because a slice is totally geodesic.  Since $N=\partial_t$, then $\langle N,R\rangle=0$, proving that $H=\langle N,R\rangle$.
\end{enumerate}
 
 In this paper, we are interested in examples of $V$-solitons and $R$-solitons that are invariant by a one-parameter group of isometries of $\s^2\times\r$. Here we consider two types  of such surfaces. First, vertical surfaces which are invariant by vertical translations in the $t$-coordinate. Second, rotational surfaces, which are invariant by a group of rotations about an axis of $\s^2$. Under these geometric conditions on the surfaces, we give a full classifications of $V$-solitons (Sect. \ref{sec3}) and $R$-solitons (Sect. \ref{sec4}). 
 
 %%%%%%%%%%%%%%%%%%%%%%%%
 \section{Preliminaries}
 %%%%%%%%%%%%%%%%%%%%%%%%%%%%%%%%%%%%
 
 In this section, we compute each one of the terms of Eq. \eqref{eq1} for vertical and rotational surfaces. 
  The isometry group of $\s^2\times\r$ isomorphic to 
  $\mbox{Isom}(\s^2)\times\mbox{Isom}(\r)$.    The group $\mbox{Isom}(\s^2)$ is generated by   the identity, the antipodal map, rotations and reflections. The group $\mbox{Isom}(\r)$ contains the identity, translations, and reflections. 
 Therefore there are two  important one-parameter groups of isometries in $\s^2\times\r$:   vertical translations in the factor $\r$ and rotations in the factor $\s^2$. This leads two types of  invariant surfaces.  
 
  \begin{enumerate}
  \item {\it Vertical surfaces}. A vertical translation is a map of type   $T_\lambda\colon\s^2\times\r\to\s^2\times \r$ defined by $T_\lambda(p,t)=(p,t+\lambda)$, where $\lambda$ is fixed. This defines a one-parameter group of vertical translations  $\mathcal{T}=\{T_\lambda:\lambda\in\r\}$. 
    A    vertical surface  is a surface  $\Sigma$   invariant by the group $\mathcal{T}$, that is,  $T_\lambda(\Sigma)\subset\Sigma$ for all $\lambda\in\r$. The generating curve of $\Sigma$ is a curve $\alpha\colon I\subset\r\to\s^2$ in the unit sphere $\s^2$. Let us  write  this curve as 
\begin{equation}\label{a1}
\alpha(s)=(\cos u(s)\cos  v(s) ,\cos u(s) \sin  v(s),\sin u(s)),
\end{equation}
for some smooth functions $u=u(s)$ and $v=v(s)$.  Then a parametrization of $\Sigma$ is
\begin{equation}\label{p1}
\Psi(s,t)=(\cos u(s)\cos  v(s) ,\cos u(s) \sin  v(s),\sin u(s),t),\quad s\in I, t\in\r.
\end{equation}
In what follows, we parametrize the curve $\beta(s)=(u(s),v(s))$  to have 
$$ u'(s)=\cos u(s)\cos\theta(s),\quad  v'(s)= \sin\theta(s).$$

  \item {\it Rotational surfaces}. These surfaces are invariant by rotations of $\s^2$. To be precise, and after a choice of coordinates on $\s^2$, a rotation in $\s^2\times\r$ about the $z$-axis is a map $\mathcal{R}_\varphi\colon\s^2\times\r\to\s^2\times\r$, given by $$R_\varphi=\left(\begin{array}{llll} \cos \varphi&-\sin \varphi&0&0 \\ \sin \varphi&\cos \varphi&0&0\\ 0&0& 1&0\\ 0&0&0&1\end{array}\right).$$
The set  $\mathcal{R}=\{\mathcal{R}_\varphi:\varphi\in\r\}$ of all $\mathcal{R}_\varphi$, is a one-parameter group of rotations, that is ${\mathrm{SO}}(2)$.   A  rotational surface   is a surface $\Sigma$ invariant by the   group $\mathcal{R}$, that is, $\mathcal{R}_\varphi(\Sigma)\subset\Sigma$ for all $\varphi\in\r$.    The generating curve of $\Sigma$ is a curve $\alpha$ contained in the $xzt$-hyperplane which we suppose parametrized by 
\begin{equation}\label{a2}
\alpha(s)=(\cos u(s),0,\sin u(s), v(s)),\quad s\in I\subset\r,
\end{equation}
where $u=u(s)$ and $v=v8s)$ are smooth functions.  Then a parametrization of $\Sigma$ is 
\begin{equation}\label{p2}
\Psi(s,\varphi)=(\cos u(s)\cos \varphi,\cos u(s)\sin \varphi,\sin u(s), v(s)),\quad s\in I, \varphi\in \r.
\end{equation}
From now, suppose  that the   the curve $\beta(s)=( u(s), v(s))$ obtained from $\alpha$ in \eqref{a2} is parametrized by the Euclidean arc-length, that is, 
$$  u'(s)=\cos\theta(s),\quad  v'(s)=\sin\theta(s),$$
for some smooth function $\theta=\theta(s)$.   Notice that  $\theta'$ is the curvature of $\beta$ as planar curve of $\r^2$. 
  \end{enumerate}
 
 We now compute the mean curvature $H$ and the unit normal vector $N$ of vertical surfaces and rotational surfaces. 
 
 \begin{proposition}\label{pr1}
   Suppose that $\Sigma$ is a vertical   surface parametrized by \eqref{p1}.  Then the unit normal vector $N$ is 
 \begin{equation}\label{n1}
 N=(\cos \theta \sin v-\sin \theta \sin u \cos v,-\cos \theta \cos v-\sin \theta \sin u \sin v,\sin \theta \cos u,0),
 \end{equation}
 and the mean curvature $H$ is 
 \begin{equation}\label{h1}
H=\tan u\sin\theta-\frac{\theta '}{\cos u}.
\end{equation}   
\end{proposition}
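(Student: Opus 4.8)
The plan is to compute the induced metric, the Gauss map (unit normal) and the second fundamental form of the parametrization \eqref{p1} directly, viewing $\Sigma$ as a submanifold of $\r^4$ and then correcting for the fact that the ambient metric is that of $\s^2\times\r$ rather than $\r^4$. First I would record the coordinate vector fields $\Psi_s$ and $\Psi_t$. Since $\Psi(s,t)=(\alpha(s),t)$ with $\alpha(s)$ the curve \eqref{a1}, we have $\Psi_t=\partial_t=(0,0,0,1)$, which is a unit vector tangent to the $\r$-factor, and $\Psi_s=(\alpha'(s),0)$, which is tangent to $\s^2$ at $\alpha(s)$. Using the arc-length type parametrization $u'=\cos u\cos\theta$, $v'=\sin\theta$, a short computation gives $|\alpha'(s)|^2 = (u')^2 + (\cos u)^2 (v')^2 = \cos^2 u\cos^2\theta + \cos^2 u\sin^2\theta = \cos^2 u$; hence it is convenient to note $\alpha$ is \emph{not} unit speed in $\r^4$ but that the metric is $E=\cos^2 u$, $F=0$, $G=1$. (Here one sees why the authors chose that particular parametrization of $\beta$: it makes $|\alpha'|=\cos u$, which is exactly the conformal factor appearing in \eqref{h1}.)

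Next I would verify \eqref{n1}. The normal $N$ must be a unit vector in $\s^2\times\r$ orthogonal to both $\Psi_s$ and $\Psi_t$. Orthogonality to $\Psi_t=(0,0,0,1)$ forces the last coordinate of $N$ to vanish, consistent with \eqref{n1}. Orthogonality to $\Psi_s$ together with tangency to $\s^2$ (i.e. $\langle N,\alpha\rangle=0$, so that $N$ is a genuine vector field on $\s^2\times\r$) pins $N$ down up to sign; I would simply check that the stated vector \eqref{n1} is orthogonal to $\alpha(s)$, orthogonal to $\alpha'(s)$, and has unit length, all of which reduce to the Pythagorean identity after substituting \eqref{a1} and the expressions for $u',v'$. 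In fact, writing $e_1 = \frac{1}{\cos u}\alpha'$ and $e_2 = \alpha \times (\text{something})$ is unnecessary; the cleanest route is: on $\s^2$ the unit tangent to $\alpha$ is $T=\frac{1}{\cos u}\alpha'$ and a unit normal to $\alpha$ within $T_{\alpha}\s^2$ is $\nu = \alpha\wedge T$ (cross product in $\r^3$); then $N=(\nu,0)$, and one checks \eqref{n1} equals $(\alpha\wedge T,0)$ by direct expansion.

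For the mean curvature \eqref{h1}, the key point is that $H$ is the sum of principal curvatures, i.e. the trace of the shape operator with respect to the induced metric. Because the surface is a Riemannian product of a curve $\alpha\subset\s^2$ with $\r$, and the $\r$-direction is a parallel, geodesic direction in $\s^2\times\r$, the principal curvature in the $t$-direction is $0$; thus $H$ equals the geodesic curvature of $\alpha$ as a curve in $\s^2$. So the computation reduces to finding the geodesic curvature $\kappa_g$ of $\alpha$, where $\alpha$ is given by \eqref{a1} with the stated parametrization. I would compute $\kappa_g = \langle \nabla^{\s^2}_{T} T, \nu\rangle$ where $\nabla^{\s^2}$ is the Levi-Civita connection of $\s^2$, which equals $\langle \alpha'' , \nu\rangle / |\alpha'|^2$ projected appropriately — concretely, $\kappa_g = \frac{1}{|\alpha'|^3}\det(\alpha,\alpha',\alpha'') = \frac{1}{\cos^3 u}\det(\alpha,\alpha',\alpha'')$. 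Expanding $\alpha''$ from \eqref{a1} using $u'=\cos u\cos\theta$, $v'=\sin\theta$ and differentiating once more, then taking the $3\times 3$ determinant with rows $\alpha,\alpha',\alpha''$, should collapse — after using $\cos^2+\sin^2=1$ repeatedly and the relation $u'=\cos u\cos\theta$ — to $\cos^3 u\,(\tan u\sin\theta) - \cos^2 u\,\theta'$, giving $\kappa_g=\tan u\sin\theta-\theta'/\cos u$, which is \eqref{h1}. The main obstacle is purely computational: organizing the second-derivative expansion and the determinant so that the trigonometric simplification is manageable; choosing to exploit the product structure (so that only the geodesic curvature of $\alpha$ in $\s^2$ is needed, rather than a full $2\times 2$ second fundamental form in $\r^4$ with a connection correction) is what keeps it tractable. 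One should also double-check the sign/orientation convention so that the sign of $\theta'$ in \eqref{h1} matches the chosen $N$ in \eqref{n1}.
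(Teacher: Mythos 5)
Your proposal is correct and reaches both \eqref{n1} and \eqref{h1}, but it is organized differently from the paper's proof. The paper works entirely with the parametrization in $\r^4$: it computes $\Psi_s,\Psi_t$, all the coefficients $g_{ij}$ and $b_{ij}$, and substitutes into the general formula $H=(g_{22}b_{11}-2g_{12}b_{12}+g_{11}b_{22})/(g_{11}g_{22}-g_{12}^2)$, finding $b_{12}=b_{22}=0$ and $b_{11}=\cos u\,(\sin\theta\sin u-\theta')$ by brute force. You instead exploit the cylinder structure $\Sigma=\alpha(I)\times\r$: since $\partial_t$ is parallel in the product metric, the $t$-direction is automatically a principal direction with principal curvature $0$ (this is exactly why $b_{12}=b_{22}=0$ in the paper), so $H$ reduces to the geodesic curvature $\kappa_g$ of $\alpha$ in $\s^2$, which you compute via $\kappa_g=|\alpha'|^{-3}\det(\alpha,\alpha',\alpha'')$; and you identify $N=(\alpha\wedge T,0)$ with $T=\alpha'/\cos u$ rather than finding $N$ by a generic orthogonality computation. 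The two routes are computationally equivalent --- $b_{11}/g_{11}$ \emph{is} $\langle\nu,\alpha''\rangle/|\alpha'|^2=\kappa_g$ --- but yours makes the geometric content (the paper's own Example (1) in the introduction, $H=\kappa$ for vertical cylinders) explicit and shortens the second-fundamental-form work to a single $3\times3$ determinant; one can also confirm the result independently via Liouville's formula in the coordinates $(u,v)$ with metric $du^2+\cos^2u\,dv^2$, which gives $\kappa_g=\pm(\tan u\sin\theta-\theta'/\cos u)$ directly. A small point in your favor: your first fundamental form $g_{11}=\cos^2u$, $g_{12}=0$, $g_{22}=1$ is the correct one (the paper's statement $g_{11}=g_{22}=(\cos u)^2$ is a slip that happens not to affect the final value of $H$). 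The only loose end is the sign/orientation check you already flag; with $N=(\alpha\wedge T,0)$, which coincides with \eqref{n1}, the signs come out exactly as in \eqref{h1}.
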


\begin{proof}
Suppose that $\Sigma$ is parametrized by \eqref{p1}.  Then the tangent plane each point of $\Sigma$ is spanned by $\{\Psi_s,\Psi_t\}$, where 
\begin{equation}
\begin{split}
\Psi_s&=(-\cos u (\cos \theta \sin u \cos v+\sin \theta \sin v),\cos u (\sin \theta \cos v-\cos \theta \sin u \sin v),\cos \theta \cos ^2u,0),\\
\Psi_t&=(0,0,0,1).
\end{split}
\end{equation} 
A straightforward computation yields that the unit normal vector is \eqref{n1}. 

As usually, denote by $g_{ij}$ the coefficients of the first fundamental form of $\Psi$, where 
$$g_{11}=\langle\Psi_s,\Psi_s\rangle,\quad g_{12}=\langle\Psi_s,\Psi_t\rangle,\quad g_{22}=\langle\Psi_t,\Psi_t\rangle.$$
The formula of $H$ is 
$$H=\frac{g_{22}b_{11}-2g_{12}b_{12}+g_{11}b_{22}}{g_{11}g_{22}-{g_{12}}^2},$$
where $b_{ij}$ are the coefficients of the second fundamental form. Here 
$$b_{11}=-\langle N_s,\Psi_s\rangle,\quad   b_{12}=-\langle N_s,\Psi_t\rangle,\quad b_{22}=-\langle N_t,\Psi_t\rangle.$$ 
A computation of $g_{ij}$ gives  $g_{11}=  g_{22}=(\cos u)^2$ and $g_{12}=0$. In particular, $\cos u(s)\not=0$ for all $s\in I$. Then $g_{11}g_{22}-g_{12}^2=(\cos u)^4$. For the coefficients of the second fundamental, we have  $b_{12}=b_{22}=0$ and  
\begin{equation}
b_{11}=\cos u  (\sin\theta \sin -\theta ' ).
\end{equation}
Then the mean curvature $H$ is \eqref{h1}.
 \end{proof}

 \begin{proposition}\label{pr2}
   Suppose that $\Sigma$ is a rotational   surface parametrized by \eqref{p2}.  Then the unit normal vector $N$ is 
 \begin{equation}\label{n2}
N(s,\varphi)= ( \sin \theta \sin  u \cos \varphi, \sin \theta  \sin  u \sin \varphi ,-\sin \theta  \cos u,\cos \theta ),
\end{equation}
 and the mean curvature $H$ is 
\begin{equation}\label{h2}
H=\theta'-\sin\theta\tan u.
\end{equation}  
\end{proposition}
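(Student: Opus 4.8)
The plan is to follow the same scheme used in the proof of Proposition~\ref{pr1}, now adapted to the parametrization \eqref{p2}. First I would differentiate \eqref{p2} and use the normalization $u'=\cos\theta$, $v'=\sin\theta$ to get
\begin{equation*}
\begin{split}
\Psi_s&=(-\cos\theta\sin u\cos\varphi,\,-\cos\theta\sin u\sin\varphi,\,\cos\theta\cos u,\,\sin\theta),\\
\Psi_\varphi&=(-\cos u\sin\varphi,\,\cos u\cos\varphi,\,0,\,0).
\end{split}
\end{equation*}
A short computation then gives the coefficients of the first fundamental form: one finds $g_{11}=\langle\Psi_s,\Psi_s\rangle=1$ (this is exactly where the Euclidean arc-length parametrization of $\beta$ enters), $g_{12}=\langle\Psi_s,\Psi_\varphi\rangle=0$ and $g_{22}=\langle\Psi_\varphi,\Psi_\varphi\rangle=\cos^2u$, hence $g_{11}g_{22}-g_{12}^2=\cos^2u$ and in particular $\cos u(s)\neq0$ on $I$.

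Next I would pin down the unit normal $N$. The subtlety here is that $\Sigma$ sits inside $\s^2\times\r$, itself a hypersurface of $\r^4$, so $N$ must be the unit vector orthogonal to $\Psi_s$ and $\Psi_\varphi$ \emph{and} tangent to $\s^2\times\r$; equivalently, $N$ is the (normalized) vector orthogonal to the three vectors $\Psi_s$, $\Psi_\varphi$ and $\nu=(\cos u\cos\varphi,\cos u\sin\varphi,\sin u,0)$, where $\nu$ spans the normal line of $\s^2\times\r$ in $\r^4$ at $\Psi(s,\varphi)$. One can compute $N$ up to sign as the normalized generalized cross product of $\Psi_s,\Psi_\varphi,\nu$, or just verify directly that the expression in \eqref{n2} is orthogonal to each of $\Psi_s$, $\Psi_\varphi$, $\nu$ and has unit length; all of these reduce to short identities using $\cos^2\varphi+\sin^2\varphi=1$ and $\sin^2\theta+\cos^2\theta=1$. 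We fix the orientation so that $N$ is given by \eqref{n2}.

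Finally, for the mean curvature I would use the same formula
$$H=\frac{g_{22}b_{11}-2g_{12}b_{12}+g_{11}b_{22}}{g_{11}g_{22}-g_{12}^2},\qquad b_{ij}=\langle\Psi_{ij},N\rangle,$$
noting that $b_{ij}=\langle\Psi_{ij},N\rangle$ does compute the second fundamental form of $\Sigma$ in $\s^2\times\r$, because $N\perp\nu$ and therefore the component of $\Psi_{ij}$ normal to $\s^2\times\r$ makes no contribution. Computing $\Psi_{ss}$, $\Psi_{s\varphi}$, $\Psi_{\varphi\varphi}$ and pairing with \eqref{n2}, I expect $b_{11}=\theta'$, $b_{12}=0$ and $b_{22}=-\sin\theta\sin u\cos u$; substituting $g_{11}=1$, $g_{12}=0$, $g_{22}=\cos^2u$ then gives $H=b_{11}+b_{22}/\cos^2u=\theta'-\sin\theta\tan u$, which is \eqref{h2}. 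The only real obstacle I anticipate is the trigonometric bookkeeping — watching the $\varphi$-dependence cancel in $g_{ij}$, in $b_{ij}$ and especially in $\langle\Psi_{ss},N\rangle$, and being careful to impose tangency to the ambient $\s^2\times\r$ when determining $N$, rather than mere orthogonality to the tangent plane of $\Sigma$ in $\r^4$.
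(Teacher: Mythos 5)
Your proposal is correct and follows essentially the same route as the paper: same tangent vectors, same first and second fundamental form coefficients ($g_{11}=1$, $g_{12}=0$, $g_{22}=\cos^2u$, $b_{11}=\theta'$, $b_{12}=0$, $b_{22}=-\sin\theta\sin u\cos u$) and the same formula for $H$. The only difference is that you are more explicit than the paper about how $N$ is determined (orthogonality to $\Psi_s$, $\Psi_\varphi$ and the ambient normal $\nu$ of $\s^2\times\r$ in $\r^4$) and you use the equivalent convention $b_{ij}=\langle\Psi_{ij},N\rangle$ instead of $b_{ij}=-\langle N_i,\Psi_j\rangle$, which is harmless.
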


\begin{proof} 

 From \eqref{p1}, the basis $\{\Psi_s,\Psi_t\}$ at each tangent plane of $\Sigma$ is   
\begin{equation}
\begin{split}
\Psi_s(s,\varphi)&=(- u ' \sin  u \cos \varphi ,- u ' \sin  u \sin \varphi , u ' \cos  u , v'),\\
\Psi_\varphi(s,\varphi)&=(-\sin \varphi\cos  u ,\cos \varphi \cos  u ,0,0).
\end{split}
\end{equation}
Thus $g_{11}= 1$, $g_{12}=0$  and $g_{22}=\cos^2 u$, in particular, $\cos u\not=0$.  
As a consequence, the unit normal vector $N$ is \eqref{n2}. The computation of the coefficients of the second fundamental form gives 
\begin{equation}
\begin{split}
b_{11}&=\theta'\\
b_{12}&=0\\
b_{22}&=-\sin \theta \sin  u \cos  u .
\end{split}
\end{equation}
Hence we deduce that the expression of $H$ is \eqref{h2}.  

 \end{proof}
 
  %%%%%%
  \section{The class of $V$-solitons}\label{sec3}
  %%%%%%%%%%%%%%%%%%%%%%%%%%%%%%%%%%%%%%%
  Let the vector field 
  \begin{equation}\label{vv}
  V=\partial_t.
  \end{equation}
 The fact that $V$ is tangent to the fibers of the submersion $\s^2\times\r\to\s^2$ makes that $V$ has special properties. For example,   $V$-solitons of $\s^2\times\r$ can be viewed as weighted minimal surfaces in a space with density. So, let $e^tdA$ and $e^t dV$ the area and volume of $\s^2\times\r$ with a weight $e^t$, where $t$ is the last coordinate of the space. Considering the energy functional  $\Omega\mapsto E(\Omega)=\int_\Omega e^t dA$ defined for compact subdomains $\Omega\subset\Sigma$, a critical point of this functional, also called a weighted minimal surface,   is a surface characterized by the equation $H-\langle N,\nabla t\rangle=0$, where   $\nabla$ is the gradient in $\s^2\times\r$. Since $\nabla t=\partial_t=V$, we have proved that   a weighted minimal surface in $(\s^2\times\r,e^t\langle,\rangle)$ is a $V$-soliton. A property of   weighted minimal surfaces  is they satisfy a principle of tangency as a consequence of the Hopf maximum principle for elliptic equations of divergence type. In our context, the tangency principle asserts that if two $V$-solitons $\Sigma_1$ and $\Sigma_2$ touch at some interior point $p\in\Sigma_1\cap \Sigma_2$ and one surface is in one side of the other around $p$, then $\Sigma_1$ and $\Sigma_2$ coincide in a  neighborhood of $p$. The following result proves that slices are the only closed $V$-solitons. 
  
 \begin{theorem} Slices $\s^2\times\{t_0\}$, $t_0\in\r$, are the only  closed (compact without boundary) $V$-solitons in $\s^2\times\r$.
 \end{theorem}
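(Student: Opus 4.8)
The plan is to establish the two directions separately. For the direct inclusion I would check that every slice $\s^2\times\{t_0\}$ verifies the defining equation \eqref{eq1}: a slice is totally geodesic, so its mean curvature vanishes identically, and the soliton relation is then confirmed directly against the vertical unit normal; this is the routine direction, which I would dispose of quickly. The whole weight of the theorem lies in the converse, namely that a closed $V$-soliton can only be a slice, and this is where I would concentrate.

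For the converse, the idea is to extract information from the restriction to $\Sigma$ of the vertical coordinate, the height function $h=t|_\Sigma$. Because $V=\partial_t=\nabla t$ is parallel for the product metric, the ambient Hessian of $t$ vanishes and $t$ is ambient-harmonic; hence the standard formula for the Laplacian of the restriction of an ambient function to a hypersurface reduces to
\begin{equation}\label{laph}
\Delta_\Sigma h=H\,\langle N,V\rangle .
\end{equation}
Substituting the soliton equation \eqref{eq1} into \eqref{laph} gives $\Delta_\Sigma h=H^{2}\ge 0$, so $h$ is a subharmonic function on $\Sigma$.

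Now I would use that $\Sigma$ is closed. Integrating \eqref{laph} over $\Sigma$ and applying the divergence theorem, the absence of a boundary forces $\int_\Sigma H^{2}\,dA=0$, whence $H\equiv 0$. Then \eqref{laph} shows that $h$ is harmonic on the closed surface $\Sigma$, so $h$ is constant; equivalently $t$ is constant along $\Sigma$, and therefore $\Sigma$ is contained in a single slice $\s^2\times\{t_0\}$. Since $\Sigma$ is a closed surface sitting inside the two-dimensional compact manifold $\s^2\times\{t_0\}$, it must coincide with all of it, which proves the claim. An equivalent route, using the tangency principle recalled just before the statement, is to look at the top point of $\Sigma$: there $\Sigma$ touches from below the slice through that point, both surfaces are $V$-solitons, so they agree near the contact point, and an open-and-closed connectedness argument then identifies $\Sigma$ with the slice.

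The step I expect to be most delicate is pinning down formula \eqref{laph} with the sign conventions fixed in the paper (with $H$ the sum of the principal curvatures and a chosen orientation of $N$); an overall sign there would turn $h$ into a superharmonic rather than a subharmonic function. This is not fatal, however: in either case the integral of $\Delta_\Sigma h$ over the closed surface $\Sigma$ vanishes, so one still concludes $H\equiv 0$ and then $t$ constant. The only remaining care is the final topological step, namely that a compact boundaryless surface contained in a slice coincides with that slice, which rests on the connectedness of $\Sigma$.
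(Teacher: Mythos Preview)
Your main argument---computing $\Delta_\Sigma h = H\langle N,V\rangle$, substituting the soliton relation, integrating over the closed surface to force the integrand to vanish, and then invoking the maximum principle to make $h$ constant---is exactly the paper's proof; the only cosmetic difference is that the paper writes the integrand as $\langle N,V\rangle^2$ whereas you write $H^2$, which under \eqref{eq1} is the same quantity.

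There is, however, a genuine error in what you call the ``direct inclusion''. A slice $\s^2\times\{t_0\}$ is \emph{not} a $V$-soliton: it is totally geodesic so $H=0$, but its unit normal is $N=\pm\partial_t$, hence $\langle N,V\rangle=\pm 1\neq 0$, and \eqref{eq1} fails. Your sentence ``the soliton relation is then confirmed directly against the vertical unit normal'' is therefore false. Note that the paper does not attempt to prove this direction either: its proof establishes only the implication ``closed $V$-soliton $\Rightarrow$ contained in a slice'', and in fact derives $\langle N,V\rangle\equiv 0$ along the way, which is incompatible with $N=\pm\partial_t$. The honest conclusion of the argument (yours and the paper's alike) is that there are \emph{no} closed $V$-solitons at all; the statement as phrased is vacuous. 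So delete the direct-inclusion paragraph; the rest of your proposal is correct and aligned with the paper.
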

 
 \begin{proof} Let  $\psi\colon\Sigma\to\s^2\times\r$ be a closed $V$-soliton. Define on $\Sigma$ the height function $h\colon\Sigma\to\r$ by $h(q)=\langle\psi(q),\partial_t\rangle$. It is known that for any surface of $\s^2\times\r$, the Laplacian of $h$ is $\Delta h=H\langle N,\partial_t\rangle$ \cite{ro}.

 Using that $\Sigma$ is a $V$-soliton, then $\Delta h=\langle N,\partial_t\rangle^2=\langle N,V\rangle^2$. Integrating in $\Sigma$, the divergence theorem yields $\int_\Sigma \langle N,V\rangle^2=0$. Thus $\langle N,V\rangle=0$ on $\Sigma$ and $H=0$. In particular, $\Delta h=0$. By the maximum principle, $h$ is a constant function, namely $h(q)=t_0$, for some $t_0\in\r$. This proves that $\Sigma\subset \s^2\times\{t_0\}$ and thus, both surfaces coincide. 
 \end{proof}

 We begin with the  study of  $V$-solitons invariant by  the group  $\mathcal{T}$.   We prove that any vertical $V$-soliton is trivial  in the sense that it is a minimal surface, even more, we prove that it is a cylinder of type $\s^1\times\r\subset\s^2\times\r$.  
  
\begin{theorem}\label{t1}  Suppose that $\Sigma$ is a vertical  surface. Then $\Sigma$ is a $V$-soliton if and only its generating curve is a geodesic of $\s^2$ and $\Sigma$ is a vertical surface on a geodesic  of $\s^2$. 
\end{theorem}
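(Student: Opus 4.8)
The plan is to notice that on a vertical surface the Killing field $V=\partial_t$ is tangent to the fibers, hence orthogonal to the unit normal, so that the soliton equation \eqref{eq1} collapses to the minimal surface equation $H=0$; the theorem then reduces to the fact that a vertical cylinder is minimal exactly when its generating curve is a great circle.

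First I would read off from \eqref{n1} that the fourth coordinate of $N$ vanishes. Since $V=\partial_t=(0,0,0,1)$, this gives $\langle N,V\rangle=0$ at every point of $\Sigma$, so \eqref{eq1} is equivalent to $H=0$. By \eqref{h1}, and using that $\cos u\neq0$ (established in Proposition \ref{pr1}), the condition $H=0$ is in turn equivalent to the first order ordinary differential equation
\begin{equation*}
\theta'=\sin u\,\sin\theta .
\end{equation*}

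It remains to identify this equation with a geometric property of the generating curve $\alpha$ of \eqref{a1}. The cleanest route is the observation already made in Example (1) of the Introduction: the surface \eqref{p1} is the vertical cylinder $\alpha(I)\times\r$, whose fibers $\{p\}\times\r$ are geodesics of $\s^2\times\r$; hence one principal curvature is identically $0$ and the other equals, up to sign, the geodesic curvature $\kappa_g$ of $\alpha$ in $\s^2$, so $H=\pm\kappa_g$. Alternatively, one may argue directly: since $|\alpha|=1$, the curve $\alpha$ is a geodesic of $\s^2$ if and only if $\det(\alpha,\alpha',\alpha'')\equiv0$, and a short computation with \eqref{a1} shows that $\det(\alpha,\alpha',\alpha'')$ is a nonvanishing multiple of $\theta'-\sin u\,\sin\theta$. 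In either case, $H=0$ holds if and only if $\alpha$ is a geodesic. The converse needs nothing further: if $\alpha$ is a geodesic then $\kappa_g=0$, hence $H=0=\langle N,V\rangle$ and $\Sigma$ is a $V$-soliton; and since the geodesics of $\s^2$ are its great circles, $\Sigma$ is then exactly a cylinder $\s^1\times\r\subset\s^2\times\r$, which is the asserted description.

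I do not foresee a genuine difficulty here: the whole argument rests on the identity $\langle N,V\rangle=0$, and the only point deserving a little care is recognizing the ODE $\theta'=\sin u\,\sin\theta$ as the vanishing of $\kappa_g$, which follows either from the cylinder remark or from the determinant computation above. (A minor technical subtlety: the parametrization \eqref{a1} gives $\alpha$ the $\s^2$-speed $\cos u$ rather than unit speed, so when $\kappa_g$ is computed via arc length this rescaling must be taken into account; it plays no role in deciding whether the trace of $\alpha$ is a great circle.)
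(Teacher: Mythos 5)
Your proposal is correct and follows essentially the same route as the paper: the paper's proof likewise observes that the unit normal of a vertical cylinder is horizontal, so $\langle N,V\rangle=0$ and the soliton equation collapses to $H=0$, and that $H$ equals the geodesic curvature of the generating curve, so the soliton condition is exactly that $\alpha$ is a geodesic of $\s^2$. Your extra verification that the ODE $\theta'=\sin u\,\sin\theta$ from \eqref{h1} encodes the vanishing of the geodesic curvature is a harmless elaboration of the same argument.
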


 \begin{proof} Let $\Sigma$ be a vertical surface.  Since the vertical lines are fibers of the submersion $\s^2\times\r\to\s^2$,   the mean curvature $H$ of $\Sigma$ is $H=\kappa$, where $\kappa$ is the curvature of $\alpha$. Moreover, the unit normal vector is horizontal, hence $\langle N,V\rangle=0$. This proves the result. 
 \end{proof}

 We now study $V$-solitons of rotational type. As we have indicated in the previous section,  we can assume that the rotation axis is the $z$-axis. Thus a rotational surface $\Sigma$  can be parametrized by \eqref{p2}. 

An immediate example of rotational $V$-soliton   is the cylinder $\mathcal{C}=(\s^1\times\{0\})\times\r$. This surface corresponds with the curve $(u(s),v(s))=(0,s)$, $s\in\r$, in \eqref{a2}. Thus $\alpha(s)=(1,0,0,s)$  is the vertical line through the point $(1,0,0)\in\s^2$.  The unit normal  $N$ is orthogonal to $V$. Since the generating curve is a geodesic of $\s^2$, the surface is minimal, proving that $\mathcal{C}$ is  a $V$-soliton. This surface is also a vertical $R$-soliton (Thm. \ref{t1}). We now characterize rotational $V$-solitons in terms of its generating curve $\alpha$.

 \begin{proposition}\label{t2}
Let $\Sigma$ be  a rotational surface in $\s^2\times\r$. If $\Sigma$ is parametrized by \eqref{p2}, then    $\Sigma$ is a $V$-soliton  if and only if  the generating curve $\alpha$ satisfies
 \begin{equation}\label{s11}
 \left\{\begin{split}
  u'&=\cos\theta\\
  v'&=\sin\theta\\
 \theta'&=\sin\theta\tan u+\cos\theta
 \end{split}\right.
 \end{equation}
\end{proposition}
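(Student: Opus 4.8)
The plan is to substitute the expressions for $H$ and $N$ obtained in Proposition \ref{pr2} directly into the defining equation \eqref{eq1}. Since $V=\partial_t=(0,0,0,1)$ in the ambient coordinates, the inner product $\langle N,V\rangle$ is simply the last coordinate of the unit normal vector \eqref{n2}, that is, $\langle N,V\rangle=\cos\theta$. On the other hand, \eqref{h2} gives $H=\theta'-\sin\theta\tan u$. Therefore $\Sigma$ is a $V$-soliton if and only if
\[
\theta'-\sin\theta\tan u=\cos\theta,
\]
which is precisely the third equation of the system \eqref{s11}.

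To obtain the full system, I would recall that, by the normalization fixed in Section 2, the generating curve $\beta(s)=(u(s),v(s))$ is parametrized by Euclidean arc-length, so that $u'=\cos\theta$ and $v'=\sin\theta$ for the angle function $\theta=\theta(s)$. Adjoining these two identities to the equation displayed above yields \eqref{s11}. Conversely, any solution $(u,v,\theta)$ of \eqref{s11} produces, through \eqref{p2}, a rotational surface whose unit normal and mean curvature are given by \eqref{n2} and \eqref{h2}, and the third equation of \eqref{s11} is exactly $H=\langle N,V\rangle$; hence the equivalence holds in both directions.

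Since every step of the argument is an equivalence, there is no serious obstacle here: the proposition is merely the translation of \eqref{eq1} into the ODE language of the profile curve. The only point worth a brief remark is that the standing hypothesis that $\beta$ be parametrized by arc-length costs no generality, because every regular planar curve admits such a reparametrization, and the soliton condition \eqref{eq1} is invariant under reparametrizations of $\Sigma$.
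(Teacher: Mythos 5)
Your proposal is correct and follows essentially the same route as the paper: it reads off $\langle N,V\rangle=\cos\theta$ as the last component of the normal \eqref{n2}, equates it with $H=\theta'-\sin\theta\tan u$ from \eqref{h2}, and adjoins the arc-length relations $u'=\cos\theta$, $v'=\sin\theta$ to obtain \eqref{s11}. The extra remark on the invariance under reparametrization is a harmless addition not present in the paper's (shorter) argument.
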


\begin{proof}  This is an immediate consequence of Prop. \ref{pr2}. Indeed, from the expression of $N$ in \eqref{n2}, we have 
$$\langle N,V\rangle=\cos\theta.$$
Using \eqref{h2}, then Eq. \eqref{eq1} is $ \theta'=\sin\theta\tan u+\cos\theta $. 
 
 \end{proof}

We now study the solutions of \eqref{s11}, describing their main geometric properties.  Recall that $\cos u\not=0$ by regularity of the surface (Prop. \ref{pr2}). Since the last equation of \eqref{s11} does not depend on $ v$, we can study the solutions  $\alpha$ of \eqref{s11}  projecting in the $( u,\theta)$-plane, which in turn leads to the planar autonomous ordinary  system
   \begin{equation}\label{s12}
\left\{ \begin{split}
  u'&=\cos\theta\\
 \theta'&=\sin\theta\tan u+\cos\theta.
 \end{split}\right.
 \end{equation}
 
 The phase plane of \eqref{s12} is depicted in Fig. \ref{phase1}.   By regularity of the surface, $u(s)\in (-\pi/2,\pi/2)$. Thus the phase plane of \eqref{s12} is the set
$$
A=\{(u,\theta)\colon u\in(\frac{\pi}{2},\frac{\pi}{2}),\theta\in \r\}.
$$ 
 The trajectories of \eqref{s12} are the solutions $\gamma(s)=(u(s),\theta(s))$ of \eqref{s12} when regarded in $A$ and once initial conditions $(u_0,\theta_0)\in A$ have been fixed. These trajectories  foliate $A$ as a consequence of the existence and uniqueness of the Cauchy problem of \eqref{s12}.

  \begin{figure}[hbtp]  
\begin{center}
\includegraphics[width=.4\textwidth]{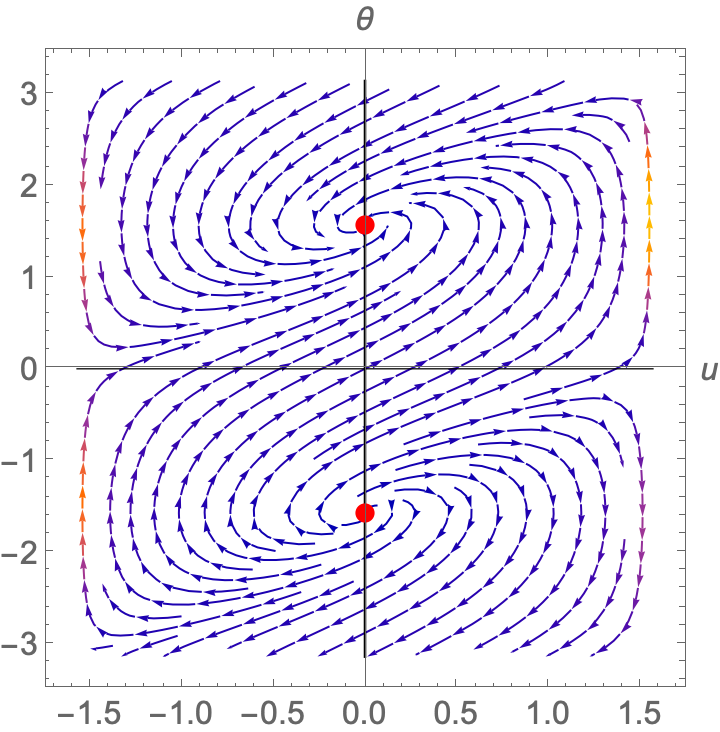} 
\end{center}
\caption{The $(u,\theta)$-phase plane of \eqref{s12}. The red points are the equilibrium points $(0,\pm\frac{\pi}{2})$, where the surface is the cylinder $\mathcal{C}$.  }
\label{phase1}
\end{figure} 

The equilibrium points of \eqref{s12} are $(u,\theta)=(0,\frac{\pi}{2})$ and $(u,\theta)=(0,-\frac{\pi}{2})$. The rest of equilibrium points can be obtained by translations by multiples of $\pi$ in the $\theta$-coordinate.  If  $(u,\theta)=(0,\frac{\pi}{2})$, then $ u(s)=0$, $ v(s)=s$. Thus   the generating curve $\alpha$ is the vertical fiber at $(1,0,0)\in\s^2$ parametrized with increasing  variable $s$, $v(s)=s$. For this curve, the corresponding surface is the vertical right cylinder $\mathcal{C}$ and this solution is already known. If $(u,\theta)=(0,-\frac{\pi}{2})$, then $v(s)=-s$ and the generating curve is again the above vertical line but parametrized by   decreasing variable $s$. The surface is   the cylinder $\mathcal{C}$ again.

 The qualitative behaviour of the trajectories near the equilibrium points are analyzed, as usually,   by the linearized system (see \cite{pe} as a general reference). At the point $(u,\theta)=(0,\frac{\pi}{2})$, we find
 $$\left(\begin{array}{ll}0&-1\\ 1& -1\end{array}\right)$$
 as the matrix of the linearized system. The eigenvalues of this matrix are the two  conjugate complex numbers $\frac{1}{2} (-1\pm i \sqrt{3})$. Since the real parts are negative, then  the point $(0,\frac{\pi}{2})$ is a stable spiral. Thus all the trajectories will move in towards the equilibrium point as $s$ increases. Similarly, for the point $(u,\theta)=(0,-\frac{\pi}{2})$, the matrix of the corresponding linearized system is  
 $$\left(\begin{array}{ll}0&1\\ -1& 1\end{array}\right).$$
Then the eigenvalues of this matrix are  $\frac{1}{2} (1\pm i \sqrt{3})$ and  the point $(0,-\frac{\pi}{2})$ is an unstable spiral.
 Since there are no more equilibrium points, every trajectory   starts in the unstable spiral  $(0,-\frac{\pi}{2})$ and ends in the stable spiral $(0,\frac{\pi}{2})$.

 In order to give initial conditions at $s=0$, notice that  if we do a vertical translation in $\r^4$ of the generating curve $\alpha$, the surface is a translated from the original. This vertical translation is simply adding a constant to the last coordinate function $v=v(s)$. Thus,  at the initial time $s=0$,  we can assume $v(0)=0$. On the other hand, the fact that the trajectories go from $(0,-\frac{\pi}{2})$ towards $(0,\frac{\pi}{2})$ implies that   the function $\theta$ attains the value $0$. As a first initial step, we can consider initial condition $\theta(0)=0$ when  the curve $\beta(s)=(u(s),v(s))$ starts at the $v$-axis. So, let 
\begin{equation}\label{ini}
u(0)=v(0)=\theta(0)=0.
\end{equation}
It is immediate   from \eqref{s11} that $(\bar{u}(s)=-u(-s), \bar{v}(s)=v(-s),\bar{\theta}(s)=-\theta(-s))$ is also a solution of \eqref{s11} with the same initial conditions \eqref{ini}. Thus   the graphic of $\beta$ is symmetric about the $v$-axis.

 Given initial conditions \eqref{ini}, we know that $(u(s),\theta(s))\to (0,\frac{\pi}{2})$. Then the right hand-sides of \eqref{s12} (also in \eqref{s11}) are bounded functions, proving that the domain of solutions is $\r$. Since $|v'(s)|\to 1$, then $\lim_{s\to\pm\infty}v(s)=\infty$ by symmetry of $\beta$. Thus $\lim_{s\to\pm\infty}\beta(s)=(0,\infty)$, that is $\beta$ is asymptotic to the $v$-axis at infinity. The projection of $\alpha$ in the factor $\s^2$ converges to $(1,0,0)$. This implies that $\Sigma$ is asymptotic to the cylinder $\mathcal{C}$.

 Because $(0,\pi/2)$ is a stable spiral, the function $\theta(s)$ converges to $\pi/2$ oscillating around this value, and the same occurs for the function $u(s)$ around $u=0$. In particular, the graphic of $\beta$ intersects  infinitely many times the $v$-axis. By the symmetry of $\beta$ with respect to the $v$-axis, we deduce that $\beta$ has (infinitely many) 
 self-intersections.

 We claim that the coordinate function $v(s)$ of $\beta$ has no critical points except $s=0$. We know 
 $$v''(s)=\theta'(s)\cos\theta(s)=\sin\theta(s)\cos\theta(s)\tan u(s)+\cos^2\theta(s).$$
 If  $v'(s)=0$ at $s=s_0$, then $\sin\theta(s_0)=0$, hence $v''(s_0)=1$. Thus all critical points are local minimum deducing that $s=0$ is the only minimum. Once we have prove that $v'\not=0$ for all $s\not=0$, then each branch of $\beta$, that is, $\beta(0,\infty)$ and $\beta(-\infty,0)$, are graphs on the $v$-axis. This proves that $\beta$ is a bi-graph on the $v$-axis. See Fig. \ref{fig1}, left.

It remains to study the case that  the curve $\beta$ does not start in the $v$-axis, that is, $u(0)=u_0$ with $u_0\not=0$.  Since the equilibrium points are of spiral type, the solutions of \eqref{s12} under this initial condition converge (or diverges) to the equilibrium points. Thus, the curve $\beta$ meets the $v$-axis being asymptotic to this axis.  

We summarize the above arguments. 

 \begin{figure}[hbtp] 
\begin{center}
\includegraphics[width=.15\textwidth]{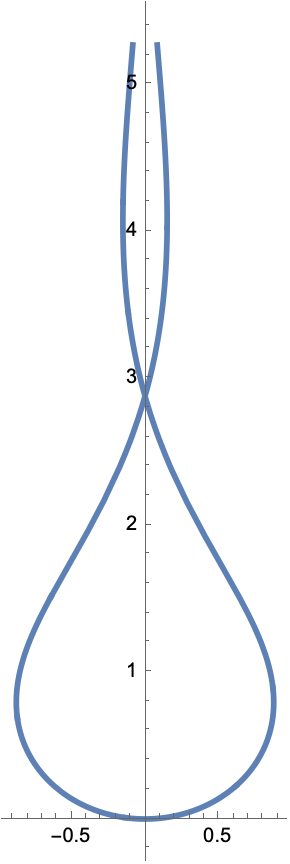}\qquad
\includegraphics[width=.15\textwidth]{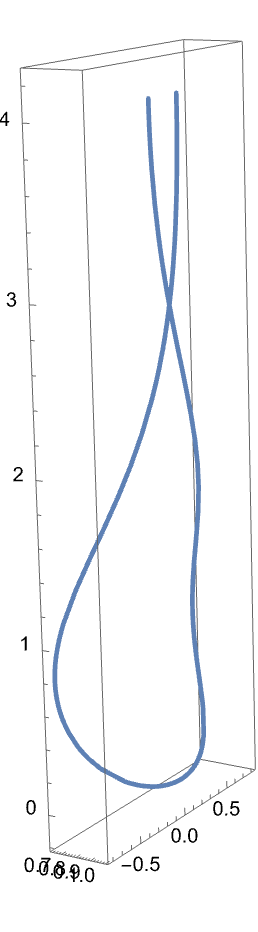}\qquad\includegraphics[width=.2\textwidth]{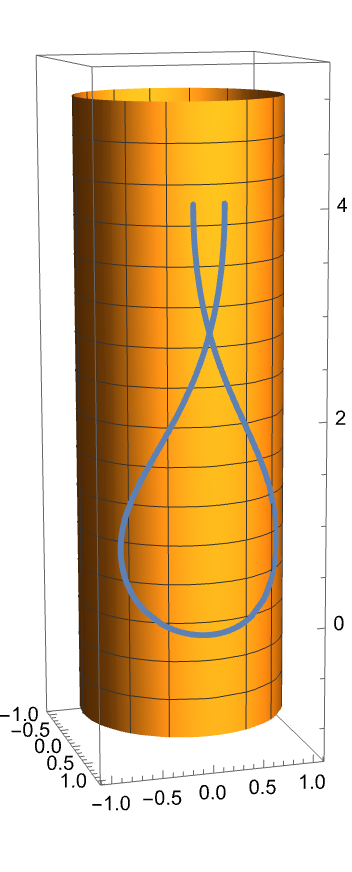}
\end{center}
\caption{Generating curves of rotational $V$-solitons. Left: the  curve $\beta$. 
Middle and right: projection of the generating curve $\alpha$ on the $xzt$-space (middle) and showing it as subset of   the cylinder $\s^1\times\r$ (right).  }
\label{fig1}
\end{figure} 

\begin{figure}[hbtp]
\begin{center}
\includegraphics[width=.3\textwidth]{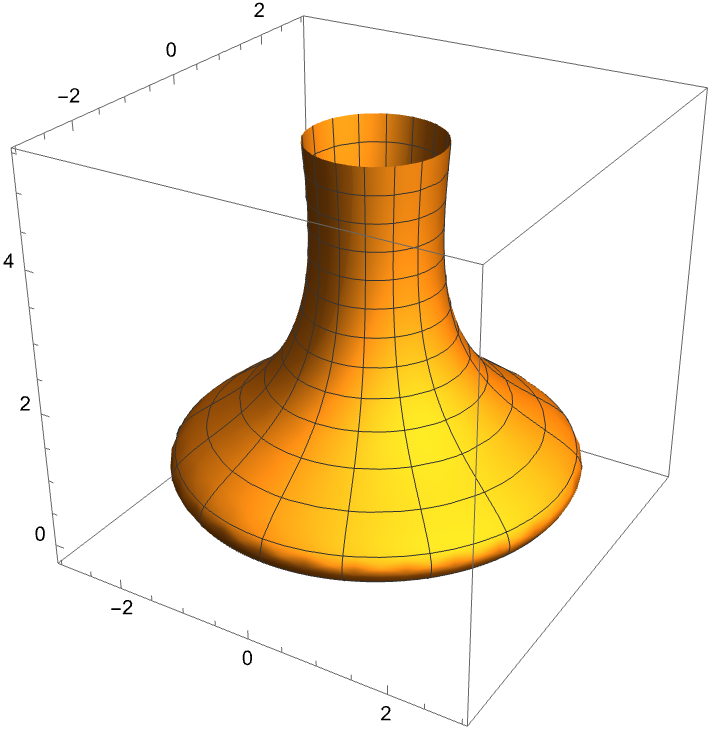}
\includegraphics[width=.3\textwidth]{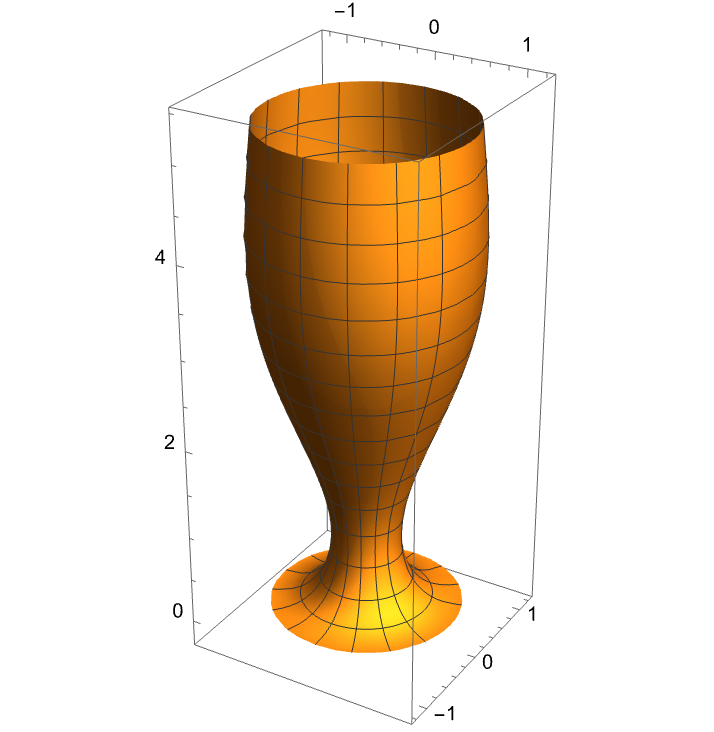}\includegraphics[width=.3\textwidth]{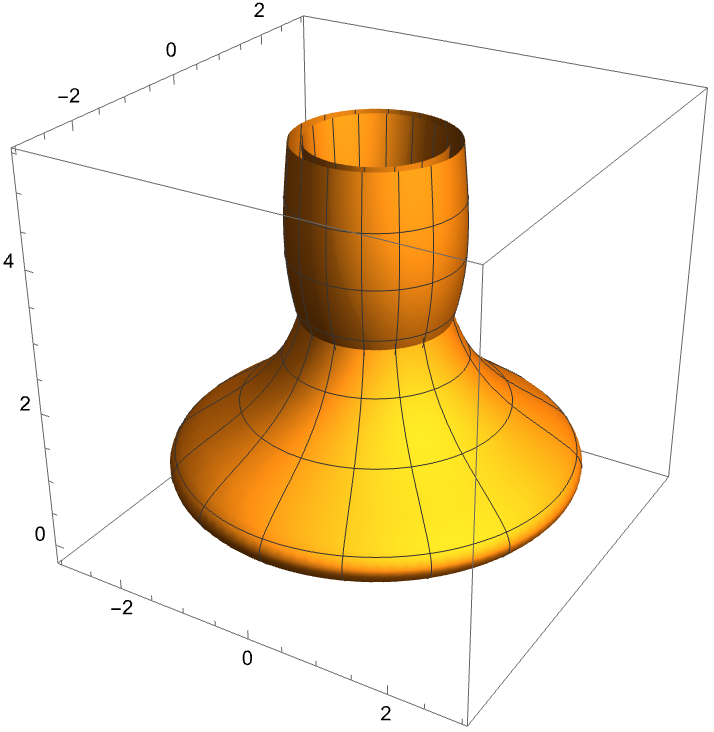}
\end{center}
\caption{ A rotational $V$-soliton after the stereographic projection $p_r$.  The surface after rotating $\beta$ in the interval $[0,\infty)$ (left) and   in the interval $(-\infty,0]$ (middle).   Right: the full surface.}
\label{fig2}
\end{figure} 

 \begin{theorem} \label{t23}
 Let $\Sigma$ be a rotational $V$-soliton. Then $\Sigma$ is the cylinder  $\mathcal{C}$ or   $\Sigma$ is parametrized by \eqref{p2} with the following properties:
 \begin{enumerate}
 \item The curve $\beta(s)=(u(s),v(s))$ has self-intersections and it is asymptotic to the $v$-axis at infinity. In case that   $\beta$ satisfies the initial conditions \eqref{ini}, then   $\beta$ is a symmetric bi-graph on the $v$-axis. 
 \item The surface $\Sigma$ is not embedded with infinitely many intersection points with the $z$-axis.
 \item The surface $\Sigma$ is asymptotic to the cylinder $\mathcal{C}$.
 \end{enumerate}

 \end{theorem}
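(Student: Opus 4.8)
The theorem compiles the phase-plane discussion made above, so the plan is to package those facts into the three asserted properties after separating off the cylinder. First I would use Proposition \ref{t2} to replace ``$\Sigma$ is a rotational $V$-soliton'' by ``the generating curve solves \eqref{s11}'', and then, since the $\theta$-equation is autonomous, study the planar system \eqref{s12} on the strip $A=\{(u,\theta):|u|<\pi/2\}$, the bound on $u$ being forced by the regularity $\cos u\ne0$ from Proposition \ref{pr2}. The only equilibria in $A$, namely $(0,\pm\tfrac{\pi}{2})$, both have $u\equiv0$ and $v'=\pm1$, i.e. they are the cylinder $\mathcal C$; so if the phase trajectory of $\Sigma$ is a point then $\Sigma=\mathcal C$, and otherwise I continue with a non-constant trajectory. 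Linearizing \eqref{s12} at the two equilibria produces eigenvalues $\tfrac12(-1\pm i\sqrt3)$ and $\tfrac12(1\pm i\sqrt3)$, so $(0,\tfrac{\pi}{2})$ is a stable focus, $(0,-\tfrac{\pi}{2})$ an unstable focus; as these are the only equilibria in $A$ and there are no periodic orbits, every non-constant trajectory goes from $(0,-\tfrac{\pi}{2})$ to $(0,\tfrac{\pi}{2})$. Along it $(u,\theta)$ stays in a compact part of $A$, so the right-hand side of \eqref{s11} is bounded, the solution is complete, and $u(s)\to0$ while $v'(s)=\sin\theta(s)\to\mp1$ as $s\to\pm\infty$, whence $v(s)\to+\infty$ at both ends. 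Consequently $\beta=(u,v)$ is asymptotic to the $v$-axis and the $\s^2$-projection of $\alpha$ tends to $(1,0,0)$, so $\Sigma$ is asymptotic to $\mathcal C$; this is property (3) and the asymptotic half of property (1).

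To reach the remaining claims I would first exploit that both equilibria are foci: $\theta$ reaches $\pm\tfrac{\pi}{2}$ oscillating, so $u$ oscillates about $0$ with amplitude tending to $0$ at each end, and hence $\beta$ crosses the line $u=0$ infinitely often --- in the stereographic model of the figures this is precisely the statement that $\Sigma$ meets the $z$-axis infinitely many times. For the self-intersections, note that $v'=\sin\theta$ vanishes only where $\sin\theta=0$, and there the last equation of \eqref{s11} gives $v''=\theta'\cos\theta=\cos^2\theta=1>0$; so every critical point of $v$ is a strict local minimum, and since $v\to+\infty$ at both ends there is exactly one, at some $s_0$. Therefore $\beta|_{(-\infty,s_0]}$ and $\beta|_{[s_0,\infty)}$ are graphs $u=f_L(v)$ and $u=f_R(v)$ over $\{v\ge v(s_0)\}$ with $f_L(v(s_0))=f_R(v(s_0))$, i.e. $\beta$ is a bi-graph over the $v$-axis, and a self-intersection corresponds to a value $v_\ast>v(s_0)$ with $f_L(v_\ast)=f_R(v_\ast)$. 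In the normalized case \eqref{ini} one checks that $(-u(-s),v(-s),-\theta(-s))$ solves \eqref{s11} with the same initial data, so by uniqueness the trajectory is invariant under $s\mapsto-s$ together with $(u,\theta)\mapsto(-u,-\theta)$; hence $s_0=0$ and $f_L=-f_R$, so the self-intersections are exactly the infinitely many zeros of $f_R$ and $\beta$ is a symmetric bi-graph on the $v$-axis --- this completes property (1). Rotating the curve, each self-intersection of $\beta$ becomes a latitude circle of $\Sigma$ traversed twice, so $\Sigma$ is not embedded; together with the previous sentence this gives property (2).

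The step I expect to be the real obstacle is the self-intersection claim for a general, non-symmetric trajectory, where the reflection argument is not available. My plan there is to compare the two branches of the bi-graph near $v=+\infty$ through the focus at $(0,\tfrac{\pi}{2})$ and the focus at $(0,-\tfrac{\pi}{2})$: both $f_R$ and $f_L$ approach $\{u=0\}$ like $e^{-v/2}$ times a sinusoid of angular frequency $\tfrac{\sqrt3}{2}$ (the real and imaginary parts of the eigenvalues), so $f_R-f_L$ is of the same type and, unless the two branches coincide --- which would force a symmetry a generic trajectory does not have --- changes sign infinitely often, each sign change being a self-intersection. Turning this asymptotic comparison into a rigorous estimate, and in the same spirit confirming that \eqref{s12} has no periodic orbits in $A$ (which underlies the heteroclinic picture used throughout), is the technical heart of the proof.
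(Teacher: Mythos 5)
Your proposal follows essentially the same route as the paper's own argument: reduce via Proposition \ref{t2} to the autonomous system \eqref{s12} on the strip $|u|<\pi/2$, identify the equilibria $(0,\pm\tfrac{\pi}{2})$ with the cylinder $\mathcal{C}$, classify them by linearization as unstable/stable spirals, deduce the heteroclinic behaviour, completeness and the asymptotics to the $v$-axis (hence to $\mathcal{C}$), prove that $v$ has a unique critical point (a minimum, since $v''=1$ there) to get the bi-graph property, and use the reflection $(u,v,\theta)(s)\mapsto(-u(-s),v(-s),-\theta(-s))$ under \eqref{ini} for the symmetry and the self-intersections. The two technical points you flag at the end --- ruling out periodic orbits and producing self-intersections for non-normalized trajectories --- are treated at the same informal phase-plane level in the paper, so your write-up is, if anything, slightly more explicit about where the remaining work lies.
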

 
  In Fig. \ref{fig2} we show the surface $\Sigma$ after a stereographic projection $p_r$ of the first factor $\s^2$ into $\r^2$, 
 $p_r\colon\s^2\times\r\to \r^2\times\r$, $p_r(x,y,z,t)=(\frac{x}{1-z},\frac{y}{1-z},t)$.

%%%%%%%%%%%%%%%%%%%%%%%%
\section{The class of $R$-solitons}\label{sec4}
%%%%%%%%%%%%%%%%%%%%%%%%%%%%%%%

In this section we study $R$-solitons, where the  vector field $R$ is 
\begin{equation}\label{vr}
R=-y\partial_x+x\partial_y.\end{equation}
Following our scheme, we will classify $R$-solitons that are vertical surfaces and next, rotational surfaces. First, suppose that $\Sigma$ is a vertical $R$-soliton. We know that $\Sigma$ is  parametrized by \eqref{p1} and that the generating curve $\alpha$ is contained in $\s^2$, see  \eqref{a1}. A first example of    vertical $R$-soliton is a  vertical cylinder over the geodesic $\s^1\times\{0\}$ of $\s^2$. To be precise, let $(u(s),v(s))=(0,s)$. Then 
$\alpha(s)=(\cos s,\sin s,0)$ in \eqref{a1} and the surface is the vertical cylinder over $\alpha$ which we have denoted by $\mathcal{C}$ in the previous section. This surface is minimal and it is immediate that $N$ is orthogonal to $R$. Thus $\mathcal{C}$ is a $R$-soliton.  Recall that $\mathcal{C}$ is also a rotational $V$-soliton.

\begin{proposition}  Suppose that $\Sigma$ is a vertical   surface parametrized by \eqref{p1}.  Then $\Sigma$ is a $R$-soliton if and only if the generating curve $\alpha$ satisfies
 \begin{equation}\label{s21}
 \left\{\begin{split}
  u'&=\cos u\cos\theta\\
  v'&= \sin\theta \\
 \theta'&=\sin\theta\sin u+(\cos u)^2\cos\theta.
 \end{split}\right.
 \end{equation}  
\end{proposition}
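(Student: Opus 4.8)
The plan is to imitate exactly the computation already carried out in Proposition~\ref{pr1}, only replacing the Killing field $V=\partial_t$ by $R=-y\partial_x+x\partial_y$, and then translating the soliton equation \eqref{eq1} into a first-order ODE system for the generating curve. Since the parametrization \eqref{p1} and the unit normal \eqref{n1} are already in hand, together with $H$ in \eqref{h1}, only two new ingredients are needed: the restriction of $R$ to $\Sigma$, and the inner product $\langle N,R\rangle$.

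First I would evaluate $R$ along $\Psi(s,t)$. With $x=\cos u\cos v$, $y=\cos u\sin v$, $z=\sin u$, the vector field $R=-y\partial_x+x\partial_y$ becomes, as an ambient vector in $\r^4$, the vector $(-\cos u\sin v,\ \cos u\cos v,\ 0,\ 0)$. Next I would pair this with the unit normal \eqref{n1}, namely $N=(\cos\theta\sin v-\sin\theta\sin u\cos v,\ -\cos\theta\cos v-\sin\theta\sin u\sin v,\ \sin\theta\cos u,\ 0)$. Expanding the dot product, the $\cos\theta$-terms give $-\cos u\sin^2 v\cos\theta-\cos u\cos^2 v\cos\theta=-\cos u\cos\theta$, while the $\sin\theta$-terms give $\cos u\sin u\sin v\cos v-\cos u\sin u\sin v\cos v=0$; hence $\langle N,R\rangle=-\cos u\cos\theta$. (A sign check against the orientation convention used for \eqref{n1} may be needed; if the intended sign is the opposite, one simply flips $N$ and correspondingly $H$, which leaves the soliton equation invariant in form.) Combining this with $H=\tan u\sin\theta-\theta'/\cos u$ from \eqref{h1}, equation \eqref{eq1} reads $\tan u\sin\theta-\theta'/\cos u=-\cos u\cos\theta$, i.e. $\theta'=\sin u\sin\theta+(\cos u)^2\cos\theta$, which is the third equation of \eqref{s21}; the first two equations are just the defining relations $u'=\cos u\cos\theta$, $v'=\sin\theta$ of the chosen parametrization recorded after \eqref{a1}. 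This proves both implications at once, since every step is an equivalence.

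The only genuine point requiring care — and the one I expect to be the main obstacle — is the bookkeeping with the first fundamental form: the parametrization \eqref{p1} is \emph{not} unit-speed (we have $g_{11}=g_{22}=\cos^2 u$ from Proposition~\ref{pr1}, not $g_{11}=1$), so $\Psi_s$ is not a unit vector and $\langle N,R\rangle$ is the honest Euclidean/ambient inner product of the \emph{unit} normal with $R$, independent of the parametrization speed; one must be sure not to slip in a spurious factor of $\cos u$. Since $H$ in \eqref{h1} was already computed with the correct (nonunit) metric, and $N$ in \eqref{n1} is genuinely unit, the substitution is consistent and no rescaling is needed. I would therefore write the proof as: "This follows from Proposition~\ref{pr1}. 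Writing $R$ as an ambient vector field along $\Psi$ gives $R=(-\cos u\sin v,\cos u\cos v,0,0)$, and a direct computation with \eqref{n1} yields $\langle N,R\rangle=-\cos u\cos\theta$. Combined with \eqref{h1}, the soliton equation $H=\langle N,R\rangle$ becomes $\theta'=\sin\theta\sin u+(\cos u)^2\cos\theta$, which together with the parametrization relations $u'=\cos u\cos\theta$, $v'=\sin\theta$ is exactly \eqref{s21}."
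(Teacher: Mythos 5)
Your proposal is correct and follows essentially the same route as the paper: evaluate $R$ along $\Psi$, compute $\langle N,R\rangle=-\cos u\cos\theta$ using \eqref{n1}, and combine with \eqref{h1} to obtain the third equation of \eqref{s21}, the first two being the parametrization conventions. The paper merely states the value of $\langle N,R\rangle$ without displaying the expansion, so your explicit computation (and the remark that $N$ is unit even though $\Psi_s$ is not) only adds detail, not a different argument.
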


\begin{proof}
The expression of the unit normal $N$ is given in \eqref{p1}. Thus
$$\langle N,R\rangle=-\cos u\cos\theta.$$
Since the expression of $H$ is given in \eqref{h1}, then Eq.  \eqref{eq1} is  $\theta'=\sin\theta\sin u+(\cos u)^2\cos\theta$, proving the result.
 \end{proof}
 
 As in the previous section, we project the solutions of \eqref{s21} on the $( u,\theta)$-plane, obtaining the autonomous system
  \begin{equation}\label{s22}
 \left\{\begin{split}
  u'&=\cos u\cos\theta\\
 \theta'&=\sin\theta\sin u+(\cos u)^2\cos\theta.
 \end{split}\right.
 \end{equation} 
 Equilibrium points are   $( u,\theta)=(0,\pm \pi/2)$ again, together the points $(u,\theta)=(\pm\frac{\pi}{2},0)$ and translations  of length $\pi$ of these points  in the $\theta$ variable. For the points $( u,\theta)=(0,\pm \pi/2)$, we have  $ u(s)=0$ and $v(s)=\pm s$ are solutions. In this case, we know that  $\Sigma$ is the vertical cylinder $\mathcal{C}$.  The   points $(u,\theta)=(\pm\frac{\pi}{2},0)$ do not correspond with surfaces because regularity is lost. In fact, coming back to the parametrization \eqref{p1}, the map $\Psi$ is the parametrization of the vertical fiber at $(0,0,1)\in\s^2$.

\begin{figure}[hbtp] 
\begin{center}
\includegraphics[width=.4\textwidth]{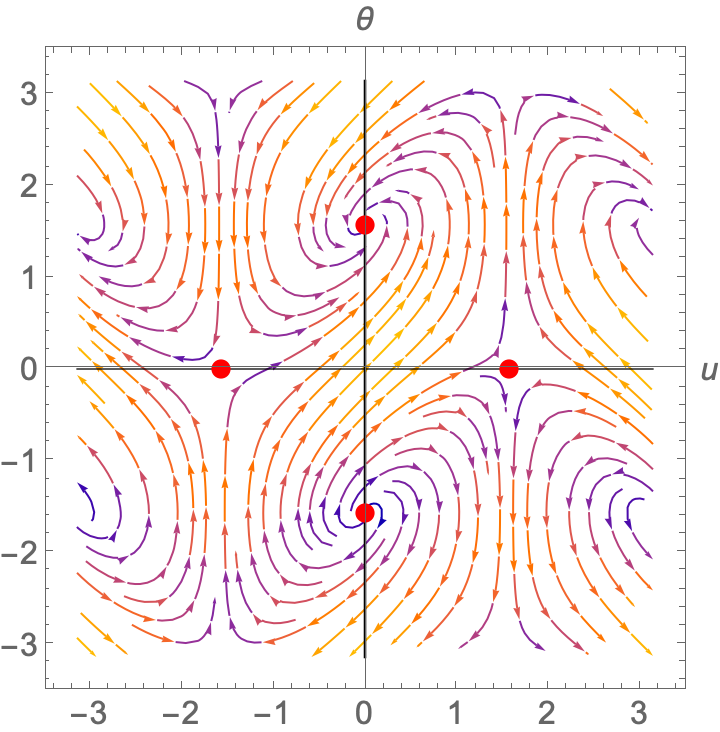} 
\end{center}
\caption{The $(u,\theta)$-phase plane of  \eqref{s22}.  The red points are the equilibrium points $(0,\pm\frac{\pi}{2})$. }\label{phase2}
\end{figure} 

The phase plane is the set $A=(-\pi,\pi)\times (-\pi,\pi)$ in the $(u,\theta)$-plane by the periodicity of $\theta$.  If we now compute   the linearized system at the points $(u,\theta)=(0,\pm\frac{\pi}{2})$, we find that they have the same character that  the ones of the system \eqref{s11}.   Thus we have that  $(u,\theta)=(0,\frac{\pi}{2})$ is a stable spiral and $(u,\theta)=(0,-\frac{\pi}{2})$  is an unstable spiral.  

For the points $( \frac{\pi}{2},0)$ and $( -\frac{\pi}{2},0)$, the linearized system is 
$$\left(\begin{array}{ll}-1&0\\ 0&1\end{array}\right),\quad \left(\begin{array}{ll}1&0\\ 0&-1\end{array}\right),$$
respectively. Since the eigenvalues are two real numbers with opposite signs, then both equilibrium points are saddle points. See Fig. \ref{phase2}. However, this does not affect to the reasoning since the arguments now are similar as in the proof of Thm. \ref{t23}. We omit the details. Figure \ref{fig3} shows generating curves of vertical $R$-solitons.

 \begin{theorem} \label{t32}
 Let $\Sigma$ be a vertical $R$-soliton. Then $\Sigma$ is the cylinder $\mathcal{C}$ or $\Sigma$ is parametrized by \eqref{p1} with the following properties:
 \begin{enumerate}
 \item  The curve $\beta(s)=(u(s),v(s))$ has self-intersections and it is asymptotic to the $v$-axis at infinity. In case that   $\beta$ satisfies the initial conditions \eqref{ini}, then   $\beta$ is a symmetric bi-graph on the $v$-axis. 
 \item The surface $\Sigma$ is not embedded with infinitely many intersection points with the $z$-axis.
 \item The surface $\Sigma$ is asymptotic to the cylinder $\mathcal{C}$.
 \end{enumerate}

 \end{theorem}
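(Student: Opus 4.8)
The plan is to mirror the argument already carried out for rotational $V$-solitons (Theorem \ref{t23}), adapting each step to the system \eqref{s22} and its phase portrait (Fig. \ref{phase2}). First I would note that the cylinder $\mathcal{C}$ corresponds to the equilibria $(0,\pm\tfrac\pi2)$ and dispose of it separately, so that from now on $\Sigma$ is parametrized by a non-constant trajectory $\gamma(s)=(u(s),\theta(s))$ of \eqref{s22} in the square $A=(-\pi,\pi)\times(-\pi,\pi)$; by periodicity of $\theta$ in \eqref{s21} no generality is lost. The key structural facts to extract from the phase analysis are: the only equilibria in $A$ are the two spirals $(0,\pm\tfrac\pi2)$ and the two saddles $(\pm\tfrac\pi2,0)$; the saddles are excluded because there $\cos u=0$ and the surface degenerates (the parametrization \eqref{p1} collapses to the vertical fiber over $(0,0,1)$), so trajectories corresponding to genuine surfaces stay in the open strip $|u|<\tfrac\pi2$. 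On that strip the right-hand side of \eqref{s22} is bounded, hence the solutions are complete, defined for all $s\in\r$, and since $|v'|=|\sin\theta|\le 1$ the generating curve $\beta(s)=(u(s),v(s))$ is defined on all of $\r$.

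Next I would establish the three listed properties in turn. For (3) and the asymptotics in (1): since $(0,\tfrac\pi2)$ is a stable spiral and $(0,-\tfrac\pi2)$ an unstable one, and there are no other admissible equilibria and no periodic orbits (the linearization forces spiralling, and a Poincaré–Bendixson / no-other-rest-point argument rules out closed orbits in the relevant region — exactly as invoked implicitly in Sect. \ref{sec3}), every non-equilibrium trajectory emanates from $(0,-\tfrac\pi2)$ and converges to $(0,\tfrac\pi2)$ as $s\to+\infty$. Thus $u(s)\to 0$ and $\theta(s)\to\pm\tfrac\pi2$ at the two ends, so $|v'(s)|\to 1$, giving $v(s)\to\pm\infty$; hence $\beta$ is asymptotic to the $v$-axis, and via \eqref{p1} the projection of $\alpha$ onto $\s^2$ tends to $\s^1\times\{0\}$, i.e. $\Sigma$ is asymptotic to $\mathcal{C}$. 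For the self-intersections in (1) and the non-embeddedness in (2): the spiralling of $\theta$ about $\pm\tfrac\pi2$ (so $\cos\theta$ changes sign infinitely often) forces $u$, through $u'=\cos u\cos\theta$ with $\cos u>0$, to oscillate about $u=0$, so $u(s)=0$ for infinitely many $s$; combined with the symmetry $(u,v,\theta)\mapsto(-u(-s),v(-s),-\theta(-s))$ of \eqref{s21} (which, under \eqref{ini}, makes $\beta$ symmetric about the $v$-axis and hence self-intersecting), one gets infinitely many self-intersections of $\beta$ and, since $u=0$ means the point lies on $\s^1\times\{0\}$ whose image under the chosen coordinates meets the $z$-axis appropriately, infinitely many intersections of $\Sigma$ with the $z$-axis — so $\Sigma$ is not embedded. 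Finally, under \eqref{ini} the claim that $\beta$ is a symmetric bi-graph over the $v$-axis follows as in Theorem \ref{t23}: compute $v''=\theta'\cos\theta=\sin\theta\cos\theta\sin u+\cos^2\theta\cos u$ from \eqref{s21}, observe that at a critical point $s_0\ne 0$ of $v$ one has $\sin\theta(s_0)=0$, whence $v''(s_0)=\cos u(s_0)>0$, so every critical point is a strict local minimum; since an interval contains at most one such, $s=0$ is the only critical point, each branch $\beta|_{(0,\infty)}$ and $\beta|_{(-\infty,0)}$ is a graph over the $v$-axis, and by the symmetry $\beta$ is a symmetric bi-graph.

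The one place where the $R$-soliton case differs nontrivially from Sect. \ref{sec3}, and hence the main obstacle, is the presence of the two saddle points $(\pm\tfrac\pi2,0)$: one must argue that the stable and unstable separatrices of these saddles, which lie on the lines $u=\pm\tfrac\pi2$, do not interfere with the admissible dynamics. Since $\{u=\pm\tfrac\pi2\}$ are invariant for \eqref{s22} (there $u'=0$) and correspond to $\cos u=0$, they are precisely the forbidden boundary; any admissible trajectory has $|u(0)|<\tfrac\pi2$ and, by invariance of the open strip, stays there for all time, so it can never reach a saddle. What remains is to confirm that inside the open strip the only $\omega$- and $\alpha$-limit sets are the two spirals — ruling out a heteroclinic cycle between the saddles along the boundary from being a limit set of interior orbits, and ruling out periodic orbits. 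This is exactly the "the arguments now are similar as in the proof of Thm. \ref{t23}'' remark in the text; I would make it precise by a Poincaré–Bendixson argument on the compact strip $[-\tfrac\pi2+\varepsilon,\tfrac\pi2-\varepsilon]\times\s^1$, noting that no orbit can accumulate on $u=\pm\tfrac\pi2$ because $\dot u = \cos u\cos\theta$ keeps its sign near those lines for $|\theta|$ bounded away from $\tfrac\pi2$, while near $|\theta|=\tfrac\pi2$ the $\theta$-dynamics pushes the orbit back toward one of the spirals. Everything else is a routine transcription of the $V$-soliton proof, which is why the theorem may legitimately be stated with the details omitted.
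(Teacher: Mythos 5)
Your proposal is correct and follows essentially the same route as the paper, which likewise reduces Theorem \ref{t32} to the phase-plane analysis of \eqref{s22} and then declares the remaining arguments ``similar to the proof of Thm.~\ref{t23}''. In fact you supply more detail than the printed proof (notably the invariance of the lines $u=\pm\frac{\pi}{2}$, which confines admissible orbits to the open strip and keeps them away from the saddles, and the Poincar\'e--Bendixson step ruling out other limit sets); the only slip is cosmetic: from \eqref{s21} one gets $v''=\theta'\cos\theta=\sin\theta\cos\theta\sin u+\cos^2 u\,\cos^2\theta$, so at a critical point of $v$ one has $v''=\cos^2 u>0$, which does not affect your conclusion.
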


\begin{figure}[hbtp]
\begin{center}
\includegraphics[width=.12\textwidth]{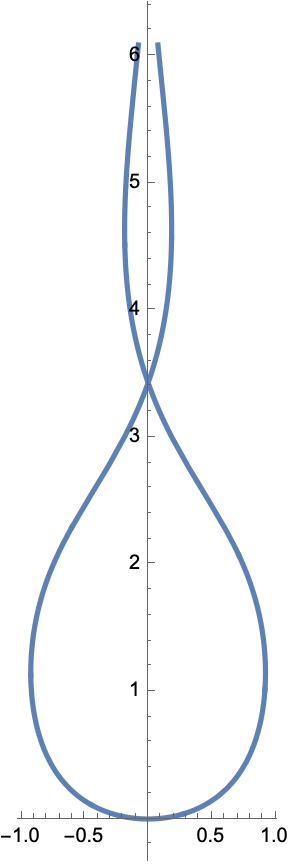}\qquad
\includegraphics[width=.25\textwidth]{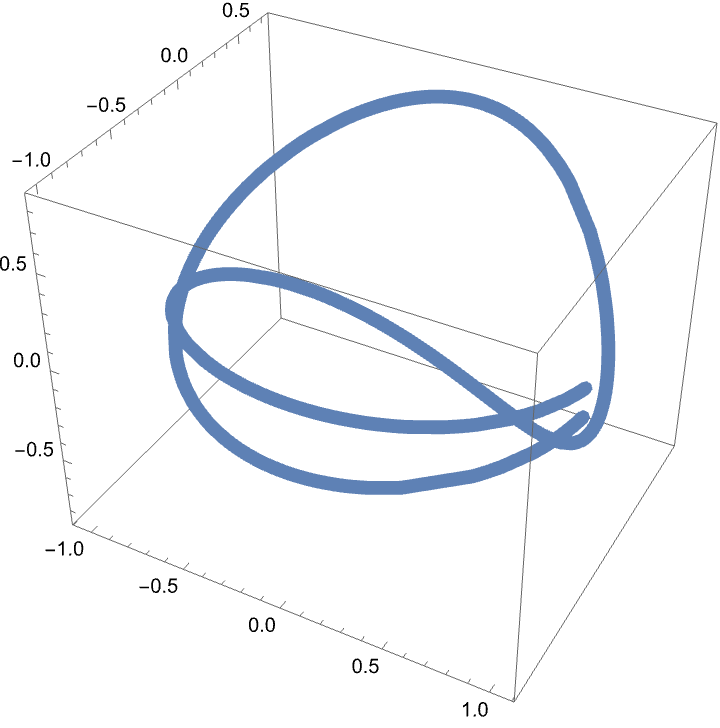}\qquad\includegraphics[width=.3\textwidth]{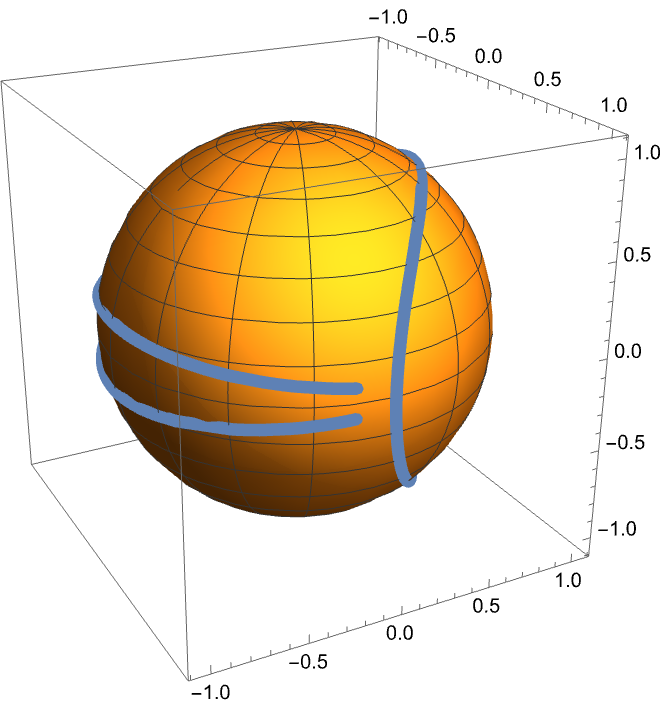}
\end{center}
\caption{Generating curves of vertical $R$-solitons. Left: solution curve $\beta(s)=(u(s),v(s))$. Middle: the generating curve $\alpha$.   Right: the generating curve $\alpha$ contained in the unit sphere $\s^2$. }
\label{fig3}
\end{figure} 

The second type of $R$-solitons are those surfaces that are invariant by a one-parameter group of rotations of the first factor.   Since we have defined in \eqref{vr} the vector field $R$ as the rotation about the direction $(0,0,1)\in\s^2$, we cannot {\it a priori} fix the rotational axis of the surface.

\begin{theorem} The only rotational $R$-solitons are:
\begin{enumerate}
\item Slices $\s^2\times\{t_0\}$, $t_0\in\r$, viewed as rotational surfaces with respect to any axis of $\s^2$ and;
\item Rotational minimal  surfaces about the $z$-axis.  
\end{enumerate}

\end{theorem}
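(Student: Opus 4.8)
\emph{Proof proposal.} The plan is to split the analysis according to whether the rotation axis of $\Sigma$ is the axis $\r e_3$ of the Killing field $R$, where $e_3=(0,0,1)$; note that we cannot normalise both the surface and $R$ simultaneously, which is exactly why two cases appear. Let $a\in\s^2$ be a unit vector and $\mathcal G=\{G_\varphi\}_{\varphi\in\r}$ the one-parameter group of rotations of $\s^2\times\r$ acting as the identity on the $\r$ factor and as the rotation $G'_\varphi\in\mathrm{SO}(3)$ about the axis $\r a$ on the $\s^2$ factor, so that $G_\varphi(\Sigma)=\Sigma$. I will use three elementary facts: $R$ is the infinitesimal generator of the group $\mathcal R$ of rotations about $\r e_3$; for an ambient isometry acting on the sphere factor by $G'\in\mathrm{SO}(3)$ the pullback of $R$ is the rotation field $X_{(G')^{-1}e_3}$, where $X_b(q)=b\times q$; and, $G_\varphi$ being an orientation-preserving isometry of $\Sigma$ joined to the identity, $N(G_\varphi(p))=dG_\varphi(N(p))$ and $H(G_\varphi(p))=H(p)$ for a continuous unit normal.

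First I would dispose of the case $a=\pm e_3$. Then $\mathcal G=\mathcal R$, so $R$ is everywhere tangent to $\Sigma$; since $N\perp T\Sigma$, equation \eqref{eq1} reduces to $H=0$, and $\Sigma$ is a rotational minimal surface about the $z$-axis. Conversely, any such surface satisfies $H=0=\langle N,R\rangle$, and a slice $\s^2\times\{t_0\}$ is an $R$-soliton (as computed in the Introduction) and is rotational about every axis; hence both families in the statement really occur, and it remains to show that when $a\neq\pm e_3$ the only rotational $R$-solitons about $\r a$ are slices.

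So assume $a\neq\pm e_3$ and write $p=(q,z)\in\Sigma$ with $N(p)=(n(p),\nu(p))$, $n(p)\in q^{\perp}=T_q\s^2$. Evaluating \eqref{eq1} at the orbit point $G_\varphi(p)\in\Sigma$ and using the transformation rules above, the soliton equation becomes
\[
H(p)=\bigl\langle n(p),\ \bigl((G'_\varphi)^{-1}e_3\bigr)\times q\bigr\rangle=\bigl\langle (G'_\varphi)^{-1}e_3,\ w(p)\bigr\rangle\qquad\text{for all }\varphi ,
\]
where $w(p)=q\times n(p)$. Splitting $e_3$ and $w(p)$ into their components along $a$ and in the plane $a^{\perp}$, the right-hand side equals a constant plus a genuine sinusoid in $\varphi$ of amplitude $\lvert e_3-\langle e_3,a\rangle a\rvert\,\lvert w(p)-\langle w(p),a\rangle a\rvert$; as $a\neq\pm e_3$ makes the first factor nonzero, the displayed identity forces $w(p)\parallel a$. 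Since $w(p)$ is orthogonal to $q$ and $\lvert w(p)\rvert=\lvert n(p)\rvert$, this gives the pointwise dichotomy: at each $p\in\Sigma$ either $n(p)=0$ (that is, $N(p)=\pm\partial_t$) or $q\perp a$.

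To conclude, set $U=\{p\in\Sigma:N(p)\neq\pm\partial_t\}$, an open $\mathcal G$-invariant subset. By the dichotomy $\pi_{\s^2}(U)$ is contained in the great circle $C_a=a^{\perp}\cap\s^2$, so $U$ lies in the cylinder $C_a\times\r$ and, having the same dimension, is an open subset of it. There $C_a$ is a geodesic, hence $H\equiv0$ on $U$, while a direct computation of the unit normal of $C_a\times\r$ gives $N=\pm(a,0)$ and therefore $\langle N,R\rangle=\pm\langle q,\,a\times e_3\rangle$. Since $a\times e_3\neq0$ lies in the plane of $C_a$, this is a non-constant real-analytic function on the open set $\pi_{\s^2}(U)\subseteq C_a$ and so cannot coincide with $H\equiv0$ unless $U=\emptyset$; then $N\equiv\pm\partial_t$ forces $\Sigma$ to be an open subset of a slice $\s^2\times\{t_0\}$, completing the classification. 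The only genuinely delicate point is this last step -- justifying that $U$ is an honest open piece of the cylinder $C_a\times\r$ and then eliminating that cylinder -- everything else reducing to Propositions \ref{pr1}--\ref{pr2} and linear algebra in $\r^3$.
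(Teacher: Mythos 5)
Your proof is correct, and it takes a genuinely different route from the paper's. The paper fixes the \emph{surface's} axis to be the $z$-axis (so it can use the explicit parametrization \eqref{p2} and the formulas of Prop.~\ref{pr2}) and lets $R$ rotate about an arbitrary axis; it then expands $\langle N,R\rangle$ explicitly as a linear combination of $\sin\varphi$ and $\cos\varphi$ whose coefficients are global constants times $\sin\theta(s)$, and the $\varphi$-independence of $H$ forces either $\sin\theta\equiv 0$ (slices) or the vanishing of those constants, which means the two axes coincide and $\langle N,R\rangle\equiv 0$ (minimal surfaces about the $z$-axis). You make the dual normalization ($R$ about $e_3$, surface about an arbitrary $a$) and implement the same ``no $\varphi$-dependence'' idea via equivariance of $N$ and $H$ under the symmetry group rather than via coordinates; this is coordinate-free and makes the mechanism transparent, but it only yields a \emph{pointwise} dichotomy ($N(p)=\pm\partial_t$ or $q\perp a$), so you need the extra final step of excluding an open piece of $\Sigma$ lying in the geodesic cylinder $C_a\times\r$ (where $H\equiv 0$ but $\langle N,R\rangle=\pm\langle q,a\times e_3\rangle$ is a nontrivial sinusoid on $C_a$, hence not identically zero on an open arc). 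That step is sound -- invariance of domain makes $U$ an open piece of the cylinder, and the normal and mean curvature there are those of the cylinder -- and it has no counterpart in the paper, where the corresponding coefficients are constants in $s$ and the alternative is resolved globally at once. Both arguments are complete; yours trades the paper's brevity (which leans on the already-computed \eqref{n2} and \eqref{h2}) for a more conceptual and self-contained treatment of the axis mismatch. The only points worth tightening are cosmetic: connectedness of $\Sigma$ should be invoked when concluding a single slice $\s^2\times\{t_0\}$ from $N\equiv\pm\partial_t$, and the identity $H\circ G_\varphi=H$, $N\circ G_\varphi=dG_\varphi(N)$ deserves the one-line justification that $G_\varphi$ is isotopic to the identity within isometries preserving $\Sigma$.
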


\begin{proof} Let $\Sigma$ be a rotational $R$-soliton.  In order to have manageable computations of the mean curvature $H$ and the unit normal $N$ of $\Sigma$, we will assume in this proof that the rotation axis of $\Sigma$ is the $z$-axis. Thus we are assuming that  the surface is parametrized by \eqref{p2}. Thus the vector field $R$ is now arbitrary and with no {\it a priori} relation with the $z$-axis. The vector field $R$ is determined by   an orthonormal basis  $B=\{E_1,E_2,E_3\}$  of $\r^3$. With respect to $B$, the vector field $R$ can be expressed by 
$$R(x_1,x_2,x_3,t)=-x_2 E_1+x_1 E_2,$$
where $(x_1,x_2,x_3)$ are coordinates of $\s^2$ with respect to $B$. 

We now write $E_1$ and $E_2$ with respect to the canonical basis of $\r^3$, 
$$E_i=(\cos m_i\sin n_i,\cos m_i \sin n_i,\sin m_i),\quad i=1,2,$$
where $m_i,n_i\in\r$. The  unit normal $N$ and the mean curvature $H$  of $\Sigma$ were computed in \eqref{n2} and \eqref{h2}, respectively. Then 
\begin{equation}\label{nrr}
\begin{split}
\langle N,R\rangle&=-\langle \Psi,E_2\rangle \langle N,E_1\rangle+\langle \Psi,E_1\rangle\langle N,E_2\rangle\\
=& (\sin m_1 \cos m_2 \sin n_2-\cos m_1 \sin n_1 \sin m_2) \sin \theta\sin\varphi\\
&+  (\sin m_1\cos m_2 \cos n_2-\cos m_1 \cos n_1 \sin m_2)\sin\theta\cos\varphi.
\end{split}
\end{equation}
Looking now Eq. \eqref{eq1}, we have that in the expression of right hand-side of \eqref{eq1}, that is, $\langle N,R\rangle$, formula \eqref{nrr}, the variable $\varphi$ do appear. However in the left hand-side of \eqref{eq1}, the mean curvature $H$, formula \eqref{h2},    does not depend on $\varphi$. This implies that the coefficients of $\sin\varphi$ and $\cos\varphi$ in \eqref{nrr} must vanish. Both coefficients contain the factor $\sin\theta$. This gives the following discussion of cases.  
\begin{enumerate}
\item Case $\sin\theta(s)=0$ for all $s$. Then $u(s)=s$ and $v(s)$ is a constant function, $v(s)=t_0$, $t_0\in\r$. This proves that $\Sigma$ is a slice $\s^2\times\{t_0\}$.
\item Case $\sin\theta(s_0)\not=0$ at some $s_0$. Then in an interval around $s=s_0$, we deduce 
$$\sin m_1 \cos m_2 \sin n_2-\cos m_1 \sin n_1 \sin m_2=0,$$
$$\sin m_1\cos m_2 \cos n_2-\cos m_1 \cos n_1 \sin m_2=0.$$
Both identities imply   $E_1\times E_2=(0,0,1)$. Thus $R$ coincides with the vector field defined in \eqref{vr} and the rotation axis is the $z$-axis. Moreover, the right hand-side of \eqref{eq1} is $0$, proving that the surface is minimal.    This proves the result.
\end{enumerate}

\end{proof}
Minimal surfaces in $\s^2\times\r$ of rotational type with respect to an axis in the first factor were classified by Pedrosa and Ritor\'e \cite{pr}. 
%%%%%%%%%%
\section*{Acknowledgements}  

Rafael L\'opez  is a member of the IMAG and of the Research Group ``Problemas variacionales en geometr\'{\i}a'',  Junta de Andaluc\'{\i}a (FQM 325). This research has been partially supported by MINECO/MICINN/FEDER grant no. PID2020-117868GB-I00, and by the ``Mar\'{\i}a de Maeztu'' Excellence Unit IMAG, reference CEX2020-001105- M, funded by MCINN/AEI/10.13039/501100011033/ CEX2020-001105-M.
Marian Ioan Munteanu is thankful to Romanian Ministry of Research, Innovation and Digitization, within Program 1 – Development of the national RD system, Subprogram 1.2 – Institutional Performance – RDI excellence funding projects, Contract no.11PFE/30.12.2021, for financial support.

\end{document}